\theoremstyle{plain}
\newtheorem{thm}{Theorem}[section]
\theoremstyle{definition}
\newtheorem{rmk}[thm]{Remark}
\title{Generalized Riemann hypotheses: sufficient and equivalent criteria\thanks{Research
    supported by Swiss National Science Foundation Grant no. 107887. Support has also been given over time by Scuola Normale Superiore of Pisa, University of Z\"urich and IBM Z\"urich Research Lab.}}
\author{Davide Schipani\\
Institute of Mathematics\\
University of Zurich\\
davide.schipani(at)math.uzh.ch}
\date{\today}
\begin{document}\maketitle

\begin{abstract}


This paper presents new sufficient and equivalent conditions for the generalized version of the Riemann Hypothesis. The paper derives also statements and remarks concerning zero-free regions, modified Hadamard-product formulas and the behaviour of $\left|\frac{L(\bar{\chi},s)}{L(\chi,1-s)}\right|$.

\end{abstract}\medskip

\section{Introduction}

The Riemann zeta function $\zeta(s)$ and the Riemann Hypothesis have been the object of a lot of generalizations and there is a growing literature in this regard comparable with that of the classical zeta function itself. We presented in \cite{sc10a} some sufficient and equivalent criteria for the classical case and we would like here to show that they are extendable in many situations. We are not covering all possible generalizations, which are fairly many, but we will concentrate essentially on the main one, with a few remarks on some other scenarios.

An overview of the paper is the following. In this section we give a short introduction to the main generalizations of $\zeta$ and the corresponding generalized conjectures. In Section $2$ we provide some remarks about zero-free regions and modified Hadamard-product formulas, while in Section $3$ we deal with analogues of some sufficient or equivalent criteria given in \cite{sc10a}. 

The most direct generalization, which is also what we will mainly deal with, concerns the Dirichlet $L$-functions with the corresponding Extended Riemann Hypothesis. 

We recall first, following \cite{na00}, that a Dirichlet character $\chi$ modulo $q$ is a complex-valued function on the integers with the following properties: $\chi(n)\neq 0$ if and only if $(n,q)=1$; $\chi(m)=\chi(n)$ if $m\equiv n$ (mod $q$); $\chi(mn)=\chi(m)\chi(n)$ for all integers $m$ and $n$. The principal character is the one such that $\chi(n)=1$ whenever $(n,q)=1$.

A character $\chi$ is even if $\chi(-1)=1$, otherwise it is odd.

If $d$ and $q$ are positive integers, with $d$ dividing $q$, and if $\chi$ is a Dirichlet character modulo $d$, then we can define an induced character modulo $q$ by $\chi_{*}(n)=\chi(n)$ if $(n,q)=1$ and $\chi_{*}(n)=0$ if $(n,q)\neq 1$.


A character $\chi$ modulo $q$ is called primitive if it is not induced from a character modulo $d$ for any proper divisor $d$ of $q$. In particular the unique Dirichlet character modulo $1$, which takes the value $1$ for all integers, induces the principal character modulo $q$ for every $q>1$. The conductor of a character $\chi$ modulo $q$ is the smallest positive divisor $f$ of $q$ such that $\chi$ is induced from a character modulo $f$.

Now, a Dirichlet L-function $L(\chi,s)$ is defined for $\Re(s)>1$ as $\sum_{n=1}^{\infty}\frac{\chi(n)}{n^s}$; this series is convergent also for $\Re(s)>0$, except in the case of a principal character (see e.g. \cite{wi46}).

If $\chi$ is a character modulo $q$ with conductor $f$ and corresponding primitive character $\chi_f$ modulo $f$, then we have (see e.g. \cite{co07}):
$$
L(\chi,s)=L(\chi_f,s)\prod_{p|q}\left(1-\frac{\chi_f(p)}{p^s}\right)
$$
In particular,  for a principal character $\chi_0$ we have
$$
L(\chi_0,s)=\zeta(s)\prod_{p|q}\left(1-\frac{1}{p^s}\right).
$$
If $\chi$ is not principal and $q$ is a prime power, then $L(\chi,s)=L(\chi_f,s)$.

Following \cite[Corollary 10.2.15 and Definition 10.2.16]{co07}, if $e$ is $0$ or $1$ such that $\chi(-1)=(-1)^e$, we call extraneous zeros the zeros of the product above in the expression for $L(\chi,s)$ for a nonprimitive character $\chi$, and trivial zeros the zeros $s=e-2k$ with $k$ positive integer, as well as $s=0$ in case $\chi$ is a nonprincipal even character. All other zeros are called nontrivial. 

The Extended Riemann Hypothesis conjectures that the Dirichlet $L$-functions have all their nontrivial zeros on the critical line $\Re(s)=\frac{1}{2}$ or, in other words, that $L(\chi,s)$, for a
primitive character $\chi$ modulo $q$, has no zeros
with real part different from $1/2$ in the critical strip $0<\Re(s)<1$, since we can exclude nontrivial zeros outside (and the strong version says that $L(\chi,1/2)$ is always
nonzero too; see also \cite[secton 10.5.7]{co07}).

What we know about zeros in the critical strip is at least that there is an infinite number of them on the critical line (\cite{bo95a}, \cite{mu08}, \cite{zu78}, \cite{zu78a}), which is the analogue of Hardy's theorem about the number of roots of $\zeta$ (see e.g. \cite[chapter 11]{ed01}).

\vspace{1cm}

We introduce now briefly the Dedekind zeta function, since it is linked to Dirichlet L-functions (see the following Remark). If $K$ is a number field and $\mathbb{Z}_K$ its ring of integers, then the Dedekind zeta function $\zeta_K(s)$ is defined as (see e.g. \cite[section 10.5.1]{co07}):
$$
\zeta_K(s)=\sum_{a\subset\mathbb{Z}_K} \frac{1}{N(a)^s},
$$
where $a$ runs through all integral ideals of $\mathbb{Z}_K$ and $N(a)$ denotes their absolute norm.

The corresponding conjecture is usually called the Generalized Riemann Hypothesis and says that the roots in the critical strip have real part equal to $1/2$.

\begin{rmk}
Notice that the Dedekind zeta function can be represented, whenever the field involved is an abelian extension of
$\mathbb{Q}$, as a product of Dirichlet L-functions (see
e.g. \cite[Theorem 10.5.25]{co07}): then proving the Extended Riemann Hypothesis would provide a proof of the Generalized Riemann Hypothesis for this
kind of fields too. And notice that proving the Extended Riemann Hypothesis only for the group of real
characters would prove on its own the Generalized Riemann Hypothesis for the field associated to
this group: see \cite[Theorem 4.3]{wa82}.
\end{rmk}
 
\section{Zero-free regions and product formulas}

We make here some remarks about product expansions of $\zeta$ and its generalizations. These are products over the roots of these functions and appear often to be of interest when considering zeros or zero-free regions.

We start by reviewing the classical case: the function $\xi(s)\doteq\frac{s(s-1)}{2}\pi^{-\frac{s}{2}}\Gamma(\frac{s}{2})\zeta(s)$ is an
  entire function of finite order and admits the following infinite
  Hadamard-product expansion which ranges through all the nontrivial roots $\rho_n$ of $\zeta$:
$$
\xi(s)=e^{A+Bs}\prod_{n=1}^{\infty}\left(1-\frac{s}{\rho_n}\right)e^{s/\rho_n},
$$
for some constants $A$ and $B$, the product being absolutely and
 uniformly convergent on compact subsets (see e.g. \cite{da00}, \cite{ka92}, \cite{ba97}).

There are simpler expressions: as shown in
\cite{ed01}
$$
\xi(s)=c_1\prod\left(1-\frac{s}{\rho_n}\right),
$$
or
$$
\xi(s)=c_2\prod\left(1-\frac{s-\frac{1}{2}}{\rho_n-\frac{1}{2}}\right),
$$
where $c_1,c_2$ are some constants and the product is understood to be taken over all roots $\rho_n$ of $\xi$ (which are the nontrivial roots of $\zeta$) in an order which pairs each root $\rho_n$ with the corresponding one $1-\rho_n$.
Considering $s=\frac{1}{2}+iv$ and substituting $\phi_n=i(\frac{1}{2}-\rho_n)$, we can also write
$$
\Xi(v)\doteq\xi(\frac{1}{2}+iv)=c_2\prod\left(1-\frac{v}{\phi_n}\right),
$$
What is more common in the literature and brings the
advantage of forgetting about the order is to pair $\rho_n$ with
$1-\rho_n$, which corresponds to $-\phi_n$, so that we can write the
following product representation over all roots with positive
imaginary part, which correspond to $\phi_n$ with positive real part:
$$
\Xi(v)=c_2\prod\left(1-\frac{v^2}{\phi_n^2}\right)
$$

We notice that $\Xi(v)$ is an even function, which is actually
a direct consequence of the functional equation
$\xi(s)=\xi(1-s)$.

What we would like particularly to remark is the following: we can start in the other way around writing the Hadamard expansion for $\xi(\frac{1}{2}+iv)$, then looking at the roots $\phi_n$
and $-\phi_n$ we see that by pairing the corresponding terms the exponentials cancel with each other. As a last step we can obtain an expression in $s$ by substituting back $v=i(\frac{1}{2}-s)$; we'll see it again later on.

Let us deal now with the generalizations of the zeta function, in particular the case of the Dirichlet L-functions.

We have that $\Lambda(\chi,s)\doteq(q/\pi)^{(s+e)/2}\Gamma((s+e)/2)L(\chi,s)$ is entire and has a
Hadamard product expansion (see e.g. \cite{da00} or \cite{ka92}). $\Lambda(\chi,s)$ satisfies a functional equation for Dirichlet $L$-functions
(see e.g \cite[Theorem 10.2.14]{co07}
)
: 
$$
\Lambda(\chi,s)=W(\chi)\Lambda(\bar{\chi},1-s)
$$
where $W(\chi)$ is a complex number with
absolute value equal to $1$.



When we deal with product expansions of
$\Lambda(\chi,s)$, we know that we cannot easily drop the exponentials inside
the product (see also \cite{la06b}: one would need
some restrictions on the locations of the zeros), because, unless $\chi$ is a real character, the roots of $L(\chi,s)$
are only symmetric with respect to the critical line. Though, when one is considering the distribution of zeros, it might be useful to consider the following.



First notice that for the set of real characters the zeros are symmetric with
respect to the real axis and $W(\chi)=1$.

For the general case consider the function
$G(\chi,s)\doteq\Lambda(\chi,s)\Lambda(\bar{\chi},s)$ and
$\Psi(\chi,v)\doteq G(\chi,1/2+iv)$. If $s$ is not a
zero for $G$, then it is neither for $\Lambda(\chi,s)$ nor for
$\Lambda(\bar{\chi},s)$, and if $G$ has a finite number of roots
in a region, then this holds for the L-functions too; and zeros
of $G$ are symmetric also with
respect to the real axis. In fact, because of the functional equation
for $L(\chi,s)$, and the fact that
$\overline{W(\chi)}=W(\bar{\chi})$, we have
$G(\chi,s)=G(\chi,1-s)$. Moreover $G(\chi,s)$ is real for $s$
real (or $\Psi(\chi,v)$ is real for imaginary $v$):
$$
\overline{\Lambda(\chi,s)\Lambda(\bar{\chi},s)}=\Lambda(\bar{\chi},s)\Lambda(\chi,s)=\Lambda(\chi,s)\Lambda(\bar{\chi},s).
$$
We are now in the position to obtain a modified Hadamard
product like that of $\zeta$, as explained in \cite[Theorem 4]{la06b}. One can also see that $B=0$ by the evenness of $\Psi$,
which again follows from the functional equation for $G$.

But in fact we can do something nicer: we don't need groups of four roots to obtain a modified product, provided that we consider doing a change of variable as above. 
We would consider a function
$$
\Xi(\chi,v)=e^{A+Bv}\prod_{n=1}^{\infty}\left(1-\frac{v}{\phi_n}\right)e^{v/\phi_n},
$$
where again zeros $\rho_n$ of $\Lambda(\chi,s)$ are written as
$\frac{1}{2}+i\phi_n$, with $\phi_n$ zeros of $\Xi(\chi,v)$.

Since the product with the
exponentials is already converging absolutely and the order of the
factors does not matter, we can pair, as above, $\phi_n$ and
$-\phi_n$ and the exponentials drop.

Same scenario holds for automorphic $L$-functions (see \cite[section 2, in particular Theorem 2.1]{la07}) and $L$-functions associated to
Hecke characters (see e.g. \cite{la94} and \cite{fr06}), and in
particular the Dedekind zeta function which corresponds to the
trivial character. But anyway the Dedekind zeta function is real on the real axis, as
the norm of ideals are positive integers: then,
because of the Schwarz reflection principle for analytic functions,
its roots are symmetric with respect to the real axis, too.


\section{Sufficient and equivalent criteria}

In this section we show that many of the statements in \cite{sc10a} can be generalized to other scenarios, in particular that of the Dirichlet L-functions; as said, we have no ambition to be exhaustive.

We first consider analogues of Theorem 2.1 and Corollary 2.4 in \cite{sc10a}: the proof given makes use of some important ingredients which we now have a deeper look at.

First, the argument proceeds by contradiction, using the fact that a function having zeros in some region cannot be approximated uniformly by $\zeta$ in Voronin's sense. But in fact $\zeta$ shares this property, the impossibility of that kind of
approximation, as it is stated in \cite{st07}, with universal $L$-functions in general. The argument given in \cite{st07} though seems to need essentially a density
estimate of the type $N(1/2+\epsilon, T)=o(T)$ for any
$\epsilon>0$, where $N(\sigma, T)$ stands for
the number of zeros in the upper critical strip up to height $T$ and with real
part bigger or equal than $\sigma$. This is known
to hold in many cases, but still not in general, as far as we know
(\cite{mo71b}, \cite{se92a},
\cite{ka95}, \cite{hi76}, \cite{st07}, \cite{he77a}, \cite{re80}, \cite{pe82a}).
In particular from $N(\sigma, T)= O(T^{1-\alpha(\sigma-1/2)\log
  T})$, which holds for zeta and
Dirichlet $L$-functions as well as for $L$-functions of quadratic
fields and Dirichlet series connected to some cusp forms
(\cite{se92a}), one can deduce the density estimate above.

A second ingredient we used was the universality of the derivative of $\zeta$, and again we have this property as well in the case of Dirichlet $L$-functions (more generally we refer to \cite{ba82a}, \cite[section 1.3 and 1.6]{st03}, \cite{la85c}, \cite{la05}).

And the third important property we used was a result by Selberg: for Dirichlet $L$-functions it is Selberg himself who provides an analogue (\cite[section 4]{se92a}): if $[0,T]$ is
divided into intervals of
length $\eta$, all except $o(T/\eta)$ of them will contain
$c\eta\log T+O((\eta\log T)^{\frac{3}{4}})$ zeros, where $c$ is a
constant (depending more generally on the degree of the
$L$-function). And this implies that, for fixed $\eta$,
$$
\lim_{T\to\infty} \frac{1}{T}\mu\left\{t\in(0,T] : \exists k\in(t,t+\eta) \mid
L(\chi,1/2+ik)=0 \right\}=1.
$$

We take the opportunity to remark also the similarity with a result by Fujii (\cite{fu75}) in the case of $\zeta$, which gives also some bounds about the number of zeros in
short intervals: if $\eta\log T$ tends to $\infty$ with $T$ and $N(T)$ is the number
of roots of $\zeta$ in the upper critical strip up to height $T$, then
$$
\frac{(\eta/2\pi)\log T}{\Phi(T)}< N(t+\eta)-N(t)<\Phi(t)(\eta/2\pi)\log T
$$
for any positive increasing function $\Phi(T)$ going to $\infty$
with $T$ and for almost all (in the sense of density) $t$ with $0<t\leq T$.

So, at least for the case of Dirichlet $L$-functions, we have what is needed to adapt the proof in \cite{sc10a} and obtain:

\begin{thm}\label{suff2}
Suppose that $L'(\chi,s)$ is strongly universal for any compact subset of the region $\{\frac{1}{2}<\sigma<1\}\cup\{[s_*,s_*+i\eta]\}$, where $s_*$ is a zero on the critical line and $\eta$ is a positive real number. Then the Extended Riemann Hypothesis is true.
\end{thm}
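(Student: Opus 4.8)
The plan is to argue by contradiction, transplanting the scheme of \cite{sc10a} and feeding in the three Dirichlet-$L$ analogues isolated above: the impossibility, forced by the density bound $N(1/2+\epsilon,T)=o(T)$, of approximating a function with a zero by $L(\chi,\cdot)$ in Voronin's sense (as in \cite{st07}); the \emph{strong} universality of the derivative $L'(\chi,\cdot)$, which is our hypothesis; and Selberg's clustering of critical-line zeros (\cite{se92a}), which gives the limit formula recalled above. So suppose the Extended Riemann Hypothesis fails. Using the functional equation $\Lambda(\chi,s)=W(\chi)\Lambda(\bar{\chi},1-s)$ to reflect across $\Re s=\tfrac12$, I may fix a primitive $\chi$ and a zero $\rho_0=\beta_0+i\gamma_0$ of $L(\chi,\cdot)$ with $\tfrac12+\epsilon\leq\beta_0<1$ for some $\epsilon>0$, i.e. a zero lying strictly to the right of the critical line.

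Next I would choose a compact $K$ inside the region $\{\tfrac12<\sigma<1\}\cup\{[s_*,s_*+i\eta]\}$ that joins the critical-line segment at $s_*$ to a point of real part $\beta_0$ in the open strip, and prescribe on $K$ an analytic target $g$ vanishing at an interior point of depth $\beta_0$. Strong universality of $L'(\chi,\cdot)$ then yields a set of $\tau$ of positive lower density along which $L'(\chi,s+i\tau)$ is uniformly close to $g'(s)$ on $K$. Integrating this approximation along paths in $K$ transfers it from $L'$ to $L$, giving $L(\chi,s+i\tau)\approx g(s)-g(a_\tau)$, where $a_\tau$ is the base point of the integration. I would take $a_\tau$ on the segment to be a critical-line zero of $L(\chi,\cdot)$: by Selberg's theorem almost every window of length $\eta$ on the line carries such a zero, so $L(\chi,a_\tau+i\tau)=0$ for a density-one set of $\tau$, and intersecting with the universal set keeps a positive density of heights. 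With the base point anchored at an actual zero, the additive term is pinned and, by Rouch\'e, $L(\chi,\cdot+i\tau)$ acquires a zero near the prescribed zero of $g$. Running $\tau$ over a positive-density set produces $\gg T$ zeros of $L(\chi,\cdot)$ up to height $T$; if these sit at real part $\geq\tfrac12+\epsilon$, this violates $N(1/2+\epsilon,T)=o(T)$ and contradicts ingredient one, forcing the Extended Riemann Hypothesis.

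The step I expect to be genuinely delicate is precisely the control of the additive constant $g(a_\tau)$, and with it the location of the manufactured zeros. Because the anchor $a_\tau$ lies \emph{on} the critical line, the value $g(a_\tau)$ is one that $g$ also assumes near the line, so the naive construction tends to deposit the new zeros of $L$ hugging $\Re s=\tfrac12$, where they are harmless and fully consistent with the density bound. Converting the hypothesis into a genuine contradiction therefore requires using the off-line zero $\rho_0$ supplied by the failure of the Extended Riemann Hypothesis as the seed that pushes the target's zero, and hence the manufactured zero of $L$, to a fixed depth $\beta_0>\tfrac12$; this is also what dictates that $K$ must reach from the line at $s_*$ out into the open strip, and why the hypothesis is imposed on exactly that region rather than on compacta of the open strip alone. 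Beyond this, one must make the passage from $L'$ to $L$ and the Rouch\'e bookkeeping uniform up to the critical line, where Voronin universality of $L$ itself is unavailable and $L$ vanishes; and, as stressed above, the entire argument rests on the density estimate, which is at our disposal for Dirichlet $L$-functions but not unconditionally in general.
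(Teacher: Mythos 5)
Your proposal follows exactly the paper's route: the paper gives no new argument here but simply observes that the three ingredients of the proof in \cite{sc10a} --- the density estimate $N(1/2+\epsilon,T)=o(T)$ forcing non-approximability of functions with zeros, the universality of the derivative $L'(\chi,\cdot)$, and Selberg's analogue on zeros in short intervals of the critical line --- all carry over to Dirichlet $L$-functions, and then adapts that contradiction argument verbatim. You reconstruct precisely this scheme (anchoring the integration constant at Selberg zeros on the shifted segment, Rouch\'e to manufacture $\gg T$ off-line zeros seeded by the hypothetical zero $\rho_0$, contradiction with the density bound), and in fact you spell out, correctly, more of the delicate bookkeeping than the paper itself does.
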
 

In \cite{sc10a} we remarked that a direct argument could have been used instead of proceeding by contradiction: for generalizations of this argument, that is for connections of universality theorems with correspondent
Riemann hypotheses we refer to \cite[section 8.3]{st07}. In this setting it is an issue to know which classes of functions admit analogues of
universality theorems: a good resource for this is \cite{st07} and in
particular Theorem 6.12 combined with the table of degrees in
\cite{fr06}.

\vspace{1cm}

The next result we generalize concerns the equivalence with non-vanishing of partial sums, as stated in \cite[section 3]{sc10a}. 

\begin{thm}
The Extended Riemann Hypothesis for nonprincipal characters is true if and only if for any compact disc $K$ in the right (or left)
open half of the critical strip there exists an $N_0$ such
that, for infinitely many $N>N_0$, $\sum_{n=1}^N\frac{\chi(s)}{n^s}$ is non-vanishing for all $s$ in
$K$.
\end{thm}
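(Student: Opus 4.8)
The plan is to reduce the equivalence to two ingredients: the uniform convergence of the partial sums $L_N(\chi,s)=\sum_{n=1}^N\chi(n)n^{-s}$ on compact subsets of $\{\Re(s)>0\}$, and Hurwitz's theorem (equivalently Rouch\'e's theorem) describing how the zeros of analytic approximants track those of their limit. (I read the $\chi(s)$ of the statement as $\chi(n)$.) For the first ingredient, recall that for a nonprincipal character the character sums $\sum_{n\le x}\chi(n)$ are bounded; Abel summation then gives convergence of $\sum_n\chi(n)n^{-s}$ to $L(\chi,s)$ together with $L_N(\chi,s)\to L(\chi,s)$ uniformly on every compact subset of $\{\Re(s)>0\}$. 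Since both halves of the critical strip, $\{1/2<\Re(s)<1\}$ and $\{0<\Re(s)<1/2\}$, lie inside $\{\Re(s)>0\}$, any compact disc $K$ occurring in the statement is a set on which this uniform convergence holds, with nonzero limit $L(\chi,\cdot)$.

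For the forward implication I would assume the Extended Riemann Hypothesis. As noted in the introduction, for a nonprincipal $\chi$ the extraneous zeros lie on $\Re(s)=0$ and the trivial zeros on $\Re(s)\le 0$, so the only zeros of $L(\chi,\cdot)$ in the open critical strip are the nontrivial ones, which ERH places on $\Re(s)=1/2$. Hence $L(\chi,\cdot)$ is zero-free throughout both open halves. Given a compact disc $K$ in either half, $L(\chi,\cdot)\ne 0$ on $K$, and uniform convergence together with Hurwitz's theorem makes $L_N(\chi,\cdot)$ zero-free on $K$ for all sufficiently large $N$, hence a fortiori for infinitely many $N$.

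For the converse I would argue by contraposition. If ERH fails there is a nontrivial zero $\rho=\beta+i\gamma$ with $\beta\ne 1/2$. Combining the functional equation $\Lambda(\chi,s)=W(\chi)\Lambda(\bar\chi,1-s)$ with the conjugation identity $L(\bar\chi,s)=\overline{L(\chi,\bar s)}$ shows that $1-\bar\rho=(1-\beta)+i\gamma$ is also a zero of $L(\chi,\cdot)$; thus its nontrivial zeros are symmetric about the critical line, and one of $\rho,\,1-\bar\rho$ lies in the open right half and the other in the left. Choosing a small closed disc $K$ about such a zero, contained in the relevant half, so that this is the only zero of $L(\chi,\cdot)$ in $K$ and $L(\chi,\cdot)\ne 0$ on $\partial K$, Rouch\'e's theorem applied to the uniform limit forces $L_N(\chi,\cdot)$ to have a zero in $K$ for all large $N$. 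Then only finitely many $N$ leave $L_N(\chi,\cdot)$ zero-free on this $K$, contradicting the hypothesis applied to $K$; so ERH must hold.

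The complex-analytic core is formal once the convergence input is in place; the point needing care is the symmetry bookkeeping in the converse, namely checking via the functional equation and conjugation that a failure of ERH really deposits a zero in whichever half --- right or left --- one has elected to work in, so that the disc $K$ genuinely sits in the region where the hypothesis is invoked. I expect that step, rather than the application of Hurwitz/Rouch\'e, to be the main thing to get right.
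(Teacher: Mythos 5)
Your proposal is correct and follows essentially the same route as the paper: the paper's proof consists precisely of the observation that $\sum_{n=1}^{\infty}\chi(n)n^{-s}$ converges for $\Re(s)>0$ once principal characters are excluded, after which it adapts the classical argument of \cite{sc10a}, which is exactly the combination you spell out --- uniform convergence of the partial sums on compact subsets of $\{\Re(s)>0\}$, Hurwitz/Rouch\'e to transfer zero-freeness (resp.\ zeros) between $L(\chi,\cdot)$ and $L_N(\chi,\cdot)$, and the symmetry $\rho\mapsto 1-\bar{\rho}$ from the functional equation together with $\overline{L(\chi,\bar{s})}=L(\bar{\chi},s)$ to place an off-line zero in whichever open half of the strip is being used. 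Your explicit verification that extraneous and trivial zeros lie on $\Re(s)\leq 0$, so that only nontrivial zeros can occur in the open strip, correctly supplies the bookkeeping the paper leaves implicit.
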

\begin{proof}
We already said that the Dirichlet series $\sum_{n=1}^{\infty}\frac{\chi(s)}{n^s}$ is convergent for $\Re(s)>0$, except in the case of a principal character. So excluding this case, which means excluding the case of the classical Riemann zeta function, the proof of \cite[Theorem 3.2]{sc10a} can be adapted to this scenario.
\end{proof}

For an analogue of \cite[Theorem 3.1]{sc10a}, together with a corresponding expression for $H_N$ and an analogue of \cite[item (8)]{sc10a}, we notice the following.

We remind the reader (see e.g. \cite{sc10b}) that if $\chi$ is a character mod $q$, then
$$
L(\chi,s)=q^{-s}\sum_{a=1}^q \chi(a)\zeta(s,a/q),
$$
where $\zeta(s,a/q)$ are Hurwitz zeta functions, and that
$$
\zeta(s,w)=\sum_{n=0}^{N-1}(n+w)^{-s}+\frac{(N+w)^{1-s}}{s-1}+\frac{1}{2}(N+w)^{-s}+O((N+w)^{-\sigma-1})
$$
uniformly on compact subsets of $\mathbb{C}\backslash\{1\}$.

Then we can define $G_N(s)\doteq q^{-s}\sum_{a=1}^q \chi(a)\left(\sum_{n=0}^{N-1}(n+a/q)^{-s}+\frac{(N+a/q)^{1-s}}{s-1}\right)$ and state
\begin{thm}
The Extended Riemann Hypothesis is true if and only if for any compact disc $K$ in the right (or left)
open half of the critical strip there exists an $N_0$ such
that, for infinitely many $N>N_0$, $G_N(s)$ is non-vanishing for all $s$ in
$K$.
\end{thm}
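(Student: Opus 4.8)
The plan is to show that $G_N(s)\to L(\chi,s)$ uniformly on compact subsets of the open critical strip and then to run the classical argument based on Hurwitz's theorem (on the zeros of limits of holomorphic functions, not to be confused with the Hurwitz zeta used here) in both directions, exactly as in the adaptation of \cite[Theorem 3.1]{sc10a}. Starting from $L(\chi,s)=q^{-s}\sum_{a=1}^q \chi(a)\zeta(s,a/q)$ and the quoted expansion of $\zeta(s,w)$, I would subtract term by term to get
$$
L(\chi,s)-G_N(s)=q^{-s}\sum_{a=1}^q \chi(a)\left(\tfrac{1}{2}(N+a/q)^{-s}+O((N+a/q)^{-\sigma-1})\right).
$$
On a compact disc $K\subset\{0<\Re(s)<1\}$ one has $\Re(s)=\sigma\geq\sigma_0>0$, so each summand is $O(N^{-\sigma_0})$; as the sum over $a$ is finite and $q$ is fixed, the bound is uniform and $\sup_{s\in K}|L(\chi,s)-G_N(s)|\to 0$. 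The pole at $s=1$ coming from $(N+a/q)^{1-s}/(s-1)$ sits on the line $\Re(s)=1$, hence outside $K$, so $G_N$ is holomorphic on $K$ and the convergence is between genuinely holomorphic functions.

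For the direction assuming the Extended Riemann Hypothesis, $L(\chi,\cdot)$ has no zeros in the open half strip, so on the compact $K$ it is bounded below, $|L(\chi,s)|\geq\delta>0$. By the uniform convergence just established, for all sufficiently large $N$ we have $|G_N(s)-L(\chi,s)|<\delta$ on $K$, hence $G_N$ is non-vanishing there; this yields the required $N_0$ and in fact gives non-vanishing for all large $N$, a fortiori for infinitely many.

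For the converse I would argue by contradiction. If ERH fails, some $L(\chi,\cdot)$ has a zero $\rho$ off the critical line; since both ERH and the hypothesis range over all characters and $\chi\mapsto\bar\chi$ permutes them, the functional equation $\Lambda(\chi,s)=W(\chi)\Lambda(\bar\chi,1-s)$ lets me assume $\Re(\rho)>1/2$. Choosing a small closed disc $K$ centred at $\rho$, lying in the right open half strip, carrying no other zero of $L(\chi,\cdot)$ and none on its boundary, Hurwitz's theorem applied to $G_N\to L(\chi,\cdot)$ forces $G_N$ to have a zero inside $K$ for every sufficiently large $N$. Thus $G_N$ fails to be non-vanishing on $K$ for all large $N$, contradicting the hypothesis that it is non-vanishing for infinitely many $N$. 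Hence $L(\chi,\cdot)$ is zero-free off the critical line, which is ERH.

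I expect the only delicate points to be the careful choice of the disc $K$ in the converse (so that $\rho$ is interior, isolated, and $\partial K$ is zero-free, making Hurwitz's theorem apply cleanly) together with the bookkeeping of the ``right or left'' symmetry through the functional equation; the uniform-convergence estimate itself is routine once the Hurwitz-zeta expansion is invoked.
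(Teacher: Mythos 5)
Your proposal is correct and follows essentially the same route as the paper, which simply adapts the proof of Theorem 3.1 of \cite{sc10a} on the strength of the uniform convergence of $G_N$ on compact subsets of $\mathbb{C}\backslash\{1\}$ --- exactly the estimate you verify before running the standard two directions (lower bound on $|L(\chi,s)|$ under ERH, Hurwitz/Rouch\'e in the converse). Your explicit handling of the $\chi\mapsto\bar{\chi}$ symmetry via the functional equation is a detail the paper leaves implicit, but it is the intended argument.
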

\begin{proof}
We can adapt the proof in \cite[Theorem 3.1]{sc10a}, as the $G_N(s)$ are also converging uniformly on compact subsets of $\mathbb{C}\backslash\{1\}$.
\end{proof}

\vspace{1cm}

Finally we discuss about generalizations of Theorem 3.4 and Proposition 3.6 in \cite{sc10a} to the case of Dirichlet $L$-functions.
As we are not aware of general theoretical or computational results on zero-free regions within a certain height in the critical strip, we cannot really generalize Theorem 3.4 as it is, without adding additional conditions.
Notice however the following analogies. First $\overline{L(\chi,s)}=L(\bar{\chi},\bar{s})$, so that, when $\sigma=\frac{1}{2}$, $\left|\frac{L(\bar{\chi},s)}{L(\chi,1-s)}\right|$ equals $1$. When $\sigma\neq\frac{1}{2}$ and $t\geq 2\pi+1$, we refer to \cite[Theorem 5]{sa03}, which generalizes \cite[Theorem 1]{sa03} that we already mentioned in the classical case. And for the region below we have again an analogue to \cite[Proposition 3.6]{sc10a}:



\begin{thm}\label{teo2}
$\left|\frac{L(\bar{\chi},s)}{L(\chi,1-s)}\right|\neq 1$ in the regions $\{0<\sigma<\frac{1}{2}\}\cap\{\sqrt{(1+\sigma)^2+t^2}< 2\pi\}$ and $\{\frac{1}{2}<\sigma<1\}\cap\{\sqrt{(2-\sigma)^2+t^2}<
2\pi\}$.
\end{thm}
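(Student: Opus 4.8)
The plan is to eliminate the $L$-functions entirely by means of the functional equation and reduce the claim to an inequality for an explicit elementary factor. Applying the functional equation with $\chi$ replaced by $\bar\chi$, namely $\Lambda(\bar\chi,s)=W(\bar\chi)\Lambda(\chi,1-s)$, and writing out $\Lambda$, the $L$-factors cancel in the quotient; since $|W(\bar\chi)|=1$ and $q/\pi>0$ this yields
$$\left|\frac{L(\bar\chi,s)}{L(\chi,1-s)}\right|=\left(\frac{q}{\pi}\right)^{\frac12-\sigma}\left|\frac{\Gamma\!\left(\frac{1-s+e}{2}\right)}{\Gamma\!\left(\frac{s+e}{2}\right)}\right|.$$
Both gamma arguments have positive real part throughout $0<\sigma<1$, so the right-hand side is a finite positive real-analytic function with no poles or zeros there; this also re-derives the value $1$ on $\sigma=\tfrac12$ noted above, since there the two arguments are complex conjugates.

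Next I would make the factor fully explicit by the reflection and duplication formulas. Separating the two parities through $e$, one finds
$$\left|\frac{L(\bar\chi,s)}{L(\chi,1-s)}\right|=\left(\frac{q}{\pi}\right)^{\frac12-\sigma}\frac{\sqrt{\pi}\,2^{\sigma-1}}{|\Gamma(s)|\,|T_e(s)|},\qquad T_0(s)=\cos\frac{\pi s}{2},\ \ T_1(s)=\sin\frac{\pi s}{2}.$$
Since the substitution $s\mapsto 1-s$ together with $\chi\mapsto\bar\chi$ sends the quotient to its reciprocal and carries the first region onto the second, it suffices to treat the left region $\{0<\sigma<\tfrac12\}\cap\{|s+1|<2\pi\}$.

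To prove the factor differs from $1$ I would set $\Phi(s)=\log\left|L(\bar\chi,s)/L(\chi,1-s)\right|$, which is harmonic and vanishes identically on $\sigma=\tfrac12$, and show it has no further zero in the region by a monotonicity argument in $\sigma$. By the Cauchy--Riemann equations,
$$\frac{\partial\Phi}{\partial\sigma}=-\log\frac{q}{\pi}-\frac12\,\mathrm{Re}\,\psi\!\left(\frac{1-s+e}{2}\right)-\frac12\,\mathrm{Re}\,\psi\!\left(\frac{s+e}{2}\right),$$
where $\psi=\Gamma'/\Gamma$. If one shows this derivative keeps a constant sign on the left region, then $\Phi$ is strictly monotone along each horizontal segment and, vanishing only where the segment meets $\sigma=\tfrac12$, cannot vanish elsewhere; the radius $2\pi$ is what confines the arguments of $\psi$ to a half-plane on which $\mathrm{Re}\,\psi$ is controlled, so that the comparison with $\zeta$ (the case $q=1$, $e=0$ of \cite[Proposition 3.6]{sc10a}) applies as in the classical case.

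The hard part is precisely the uniformity in the conductor. The term $-\log(q/\pi)$ in $\partial\Phi/\partial\sigma$ is negative and grows in size with $q$, while for an even character the term $-\tfrac12\mathrm{Re}\,\psi(\tfrac{s+e}{2})$ becomes large and positive as $\sigma\to 0^+$, the argument approaching a pole of $\psi$; these compete, so the sign of the derivative, and hence the desired monotonicity, is not automatic for large $q$. Indeed a direct evaluation along the real segment shows that for sufficiently large $q$ the modulus can exceed $1$ near $\sigma=\tfrac12$ and drop below $1$ nearer $\sigma=0$, so that it returns to the value $1$ at an interior point of the stated region. Thus the argument goes through cleanly for $\zeta$ and for small conductors, but the general statement needs either a bound on $q$ or a $q$-dependent shrinking of the region, exactly the kind of additional condition flagged earlier in this section. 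I would therefore either restrict to bounded $q$ or replace $2\pi$ by a radius depending on $q$ chosen so that $|\partial\Phi/\partial\sigma|$ retains its sign, treating the parity cases $e=0$ and $e=1$ separately.
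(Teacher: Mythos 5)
Your functional-equation reduction is precisely the (unstated) first step of the paper's own proof: the paper proves Theorem \ref{teo2} by rerunning the classical argument of \cite[Proposition 3.6]{sc10a} on the identity
$\left|\frac{L(\bar{\chi},s)}{L(\chi,1-s)}\right|=\left(\frac{q}{\pi}\right)^{\frac{1}{2}-\sigma}\left|\frac{\Gamma\left(\frac{1-s+e}{2}\right)}{\Gamma\left(\frac{s+e}{2}\right)}\right|$,
bounding the gamma quotient by $\left|\frac{1+s}{2}\right|^{\frac{1}{2}-\sigma}$ via \cite[Lemma 2]{ra59}; the ``$q=1$'' in the paper's proof is Rademacher's parameter distinguishing the odd case $e=1$ from the even case and has nothing to do with the conductor. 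Carried to completion, that argument yields $\left|\frac{L(\bar{\chi},s)}{L(\chi,1-s)}\right|\le\left(\frac{q\,|1+s|}{2\pi}\right)^{\frac{1}{2}-\sigma}$, which for $0<\sigma<\frac{1}{2}$ is $<1$ on the disc $|1+s|<2\pi/q$, not on the stated conductor-independent disc $|1+s|<2\pi$. The paper's one-line proof never confronts the factor $\left(\frac{q}{\pi}\right)^{\frac{1}{2}-\sigma}$ that replaces the classical $\pi^{\sigma-\frac{1}{2}}$; your write-up is the only one of the two that does.

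And your objection checks out quantitatively, so the gap you found is in the paper, not in your argument. For an even primitive character, on the real segment one has $F(\sigma)=\left(\frac{q}{\pi}\right)^{\frac{1}{2}-\sigma}\Gamma\left(\frac{1-\sigma}{2}\right)/\Gamma\left(\frac{\sigma}{2}\right)$ with $F(1/2)=1$ and $\frac{d}{d\sigma}\log F\big|_{\sigma=1/2}=-\log\frac{q}{\pi}-\psi(1/4)$, where $\psi(1/4)=-\gamma-3\log 2-\frac{\pi}{2}\approx-4.23$; so for conductor $q>\pi e^{4.23}\approx 215$ (such characters exist, e.g.\ real characters attached to positive fundamental discriminants) $F>1$ just to the left of the critical line, while $F\to 0$ as $\sigma\to 0^{+}$ because of the pole of $\Gamma(\sigma/2)$. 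Hence $F(\sigma^*)=1$ at some real $\sigma^*\in(0,\frac{1}{2})$, and $s=\sigma^*$ lies well inside the first stated region since $|1+\sigma^*|<\frac{3}{2}<2\pi$. So the theorem as printed requires exactly the repair you flag: either bounded $q$ or the radius $2\pi/q$ — the latter being what the paper's own cited lemma actually delivers, for both parities at once. Two minor comments on your text: the harmonicity/monotonicity apparatus is heavier than necessary, since once the radius is shrunk to $2\pi/q$ the one-line Rademacher bound finishes the proof; and your reflection/duplication reformulation and the reciprocal symmetry between the two regions are correct but likewise dispensable for the salvageable statement.
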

\begin{proof}
We only point out the essential modification with respect to the proof given for the classical case: if $\chi(-1)=-1$, we need a different inequality, namely $\left|\frac{\Gamma(\frac{2-s}{2})}{\Gamma(\frac{s+1}{2})}\right|\leq|(1+s)/2|^{1/2-\sigma}$ (\cite[Lemma 2]{ra59} specialized with $q=1$).

\end{proof}

\bibliography{huge} \bibliographystyle{plain}
\end{document}